\documentclass{amsart}
\usepackage[utf8]{inputenc}
\usepackage{amssymb}
\usepackage{amsthm}
\usepackage{amsmath}
\usepackage{enumerate}
\usepackage{mathtools}
\usepackage{comment}
\usepackage{todonotes}
\usepackage{xcolor}\mathtoolsset{showonlyrefs}
\usepackage{hyperref}

\newtheorem{thm}{Theorem}[section]
 
 \newtheorem{lem}[thm]{Lemma}

 \theoremstyle{definition}

 \theoremstyle{remark}
 \newtheorem{rem}[thm]{Remark}
%  \theoremstyle{example}
% \numberwithin{equation}{section}

\title[Brownian path preserving mappings on the Heisenberg group]{Brownian path preserving mappings on the Heisenberg group}

\author{Nikita Evseev}
	\address{Institute for Advanced Study in Mathematics, Harbin Institute of Technology, 150006 Harbin, China
	 and Sobolev Institute of Mathematics, 4 Academic Koptyug avenue,  630090 Novosibirsk, Russia.
	}
	\email{evseev@math.nsc.ru}

\thanks{The work is supported by the Mathematical Center in Akademgorodok under Agreement №~075-15-2022-281 from 05.04.2022. with the Ministry of Science and Higher Education of the Russian Federation.} 

%\newline {\it 2010 Mathematics Subject Classification.} Primary 53C45. Secondary 52A10, 53A05.
%\newline {\it Keywords.} Heisenberg group, horizontal Brownian motion, harmonic morphism
%\date{May 2021}

\begin{document}

\maketitle
\begin{abstract}
We study continuous mappings on the Heisenberg group that up to a time change preserve horizontal Brownian motion. 
It is proved that only harmonic morphisms possess this property.
\end{abstract}

\section{Introduction}
Let $f:\mathbb C \to \mathbb C$ be a conformal function and let $B(t)$ be a Brownian motion on $\mathbb C$.
In \cite{L1948} P. L\'evy proved that $f(B(t))$ is again Brownian motion up to a random time change. %(see also \cite{L1972ru}). 
The converse is also true: if $f$ preserve Brownian motion then it is conformal (or anti-conformal).
Then Bernard, Campbell, and Davie in \cite{BCD1979} investigated mappings $f:\mathbb R^n \to \mathbb R^p$ and proved 
that a continuous mapping $f$ preserves Brownian motion iff $f$ is a harmonic morphism. 
They also considered various specific examples. In particular it turned out that $f:\mathbb R^n \to \mathbb R^n$ ($n > 2$) preserve Brownian motion iff it is an affine map.
The last relates to what is known from the works of Fuglede \cite{F1978} and Ishihara \cite{I1979}: 
a map between Riemannian manifolds is a harmonic morphism if and only if it is a horizontally conformal harmonic map.
In \cite{CO1983} Csink and {\O}ksendal solve more general problem: they described $C^2$-mappings that map the path of one diffusion process into the path of another diffusion process. 

In this paper we study continuous mappings between Heisenberg groups $f:\mathbb H^n \to \mathbb H^p$ that preserve horizontal Brownian motion.
Following the approach from \cite{BCD1979} we proved that a continuous mapping $f$ preserves Brownian motion on the Heisenberg group if and only if it is a harmonic morphism.
Close results were obtained by Wang in \cite{W2016}, where images of Brownian motions on the Heisenberg group under conformal maps were studied.
Finally, we should mention that \cite[Theorem 1]{CO1983}
generalizes our Theorem \ref{theorem:main} in case of higher smoothness.

The paper is organized as follows. In Section 2 we provide necessary notions on the Heisenberg group and on horizontal Brownian motion. 
In Section 3 we revise the result on representation of the solution of the Dirichlet problem via Brownian motion.
Then, in Section 4 we introduce and prove the main result.

\section{Preliminaries}
\subsection{The Heisenberg group.}
The Heisenberg group $\mathbb H^n$ is defined as  $\mathbb R^{2n+1} = \mathbb C^{n}\times\mathbb R$ with the group law
$$
(z,t)*(z',t') = \big(z+z', t+t' +2\operatorname{Im}\sum_{j=1}^nz_j\overline{z'}_j \big)
= \big(x+x', y+y', t+t' + 2\sum_{j=1}^n(y_jx'_j-x_jy'_j)\big).
$$
The vector fields 
$$
X_j = \frac{\partial}{\partial x_j} + 2y_j\frac{\partial}{\partial t},\quad
Y_j = \frac{\partial}{\partial y_j} - 2x_j\frac{\partial}{\partial t},\quad
T = \frac{\partial}{\partial t}
$$
are left invariant and form a basis of left invariant vector fields on Heisenberg group $\mathbb H^n$.
The only non-trivial commutator relations are $[X_j,Y_j] = 4T$, $j=1,\dots,n$.
For all $g\in\mathbb H^n$ horizontal distribution $H_g\mathbb H^n=span\{X_1(g),Y_1(g),\dots, X_n(g), Y_n(g) \}$. 
A curve $\gamma:[a,b]\to \mathbb H^n$ is \textit{horizontal} if $\gamma'(t)\in H_{\gamma(t)}\mathbb H^n$ for almost every $t\in[a,b]$.  
Let $\gamma(t) = (\xi_1(t),\zeta_1(t),\dots, \xi_n(t),\zeta_n(t), \eta(t))$, then it can be shown that $\gamma(t)$ is horizontal curve if and only if
$$
\eta'(t) = 2\sum_{j=1}^{n} \big(\xi'_j(t)\zeta_j(t) - \zeta_j'(t)\xi_j(t)\big) \quad \text{ for almost every } t\in[a,b].
$$

A mapping $f:U\to\mathbb H^p$ is called \textit{contact} if $\gamma\circ f$ is horizontal curve for any horizontal curve $\gamma:[a,b]\to U$.
If $f(g)=(u_1(g),v_1(g),\dots, u_n(g),v_n(g),h(g))$ then the contact condition is equivalent to
\begin{align}\label{eq:contact}
X_ih = 2\sum_{j=1}^p v_jX_iu_j - u_jX_iv_j\\ 
Y_ih = 2\sum_{j=1}^p v_jY_iu_j - u_jY_iv_j,
\end{align}
for $i=1,\dots n$.

For any $g=(z,t)\in\mathbb H^n$ define the Kor\'anyi norm 
$$
\rho(g) = (|z|^4+t^2)^{\frac{1}{4}}
$$
and the Kor\'anyi metric $\rho(g_1,g_2) = \rho(g_2^{-1}*g_1)$.

If we are given an absolutely continuous curve $\tilde\gamma(t) = (\xi_1(t),\zeta_1(t),\dots, \xi_n(t),\zeta_n(t)):[a,b]\to\mathbb R^{2n}$,
then by defining 
$$
\eta(t):= \eta(a) + 2\sum_{j=1}^{n}\int_a^t \xi'_j(s)\zeta_j(s) - \zeta_j'(s)\xi_j(s) \ \textrm{d} s 
$$
we obtain a horizontal curve $\gamma=(\tilde\gamma, \eta)$, which is called \textit{the horizontal lift of} $\tilde\gamma$.

\subsection{Horizontal Brownian motion on the Heisenberg group.}
Let $B(t) = \big(B^1_1(t), B^2_1(t), \dots, B^1_n(t), B^2_n(t)\big)$ be a Brownian motion in $\mathbb R^{2n}$ starting at $0$.
Consider the L\'evi area integral
\begin{equation}\label{eq:Levi}
S(t) = 2\sum_{j=1}^{n}\int_0^t B^2_j(s) \ \textrm{d} B^1_j(s) - B^1_j(s) \ \textrm{d} B^2_j(s)
\end{equation}
Then the process $\mathring{W} (t) = \big(B(t), S(t)\big)$, which could be viewed as a horizontal lift of $B(t)$, 
is the solution of a system of stochastic differential equations
\begin{align*}
& \textrm{d} \mathring{W}_{2k-1}(t) = \textrm{d} B^1_k(t), \quad  \textrm{d} \mathring{W}_{2k}(t) = \textrm{d} B^2_k(t),\quad k=1,\dots,n,\\
& \textrm{d} \mathring{W}_{2n+1}(t) = 2\sum_{j=1}^{n} B^2_j(t) \ \textrm{d} B^1_j(t) - B^1_j(t) \ \textrm{d} B^2_j(t).
\end{align*} 
As a consequence $\mathring{W}(t)$ is a Markov process with generator $\frac{1}{2}\Delta_{\mathbb H}$, where 
\[
\Delta_{\mathbb H} = \sum_{j=1}^n X^2_j + Y^2_j
\]

Let $g$ be a given point in $\mathbb H^n$, then the horizontal Brownian motion starting at this point is defined as $W(t) = g*\mathring{W}(t)$.

We will need the following It\^o formula for horizontal Brownian motion, see \cite[lemma 3.2]{BGKNT2020}. % as well https://math.uni.lu/thalmaier/Stoch_Anal_2021/draft.pdf 
\begin{lem}[It\^o formula]
\label{lemma:Ito}
Let $f\in C^2(\mathbb H^n; \mathbb R)$ and $W(t)$ be a horizontal Brownian motion in $\mathbb H^n$.
Then 
\begin{equation}\label{eq:Itoformula}
f(W(t)) = f(W(0)) + \int_0^t \nabla_{\mathbb H}f(W(s)) \,\cdot \, \textrm{d} B(s) + \frac{1}{2} \int_0^t \Delta_{\mathbb H}f(W(s))  \ \textrm{d} s.
\end{equation}
\end{lem}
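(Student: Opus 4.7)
The plan is to reduce to the classical multidimensional It\^o formula on $\mathbb R^{2n+1}$ and then reorganize the resulting coefficients using the left invariant fields $X_j$ and $Y_j$.

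First, I would rewrite the horizontal Brownian motion $W(t)=g*\mathring W(t)$ in coordinates. Denoting $W(t)=(\xi_1,\zeta_1,\dots,\xi_n,\zeta_n,\eta)$, the first $2n$ components satisfy $d\xi_k=dB^1_k$ and $d\zeta_k=dB^2_k$, while a direct computation using the group law shows that the shifts from $g$ combine with the L\'evi area $S(t)$ to give
\begin{equation}
d\eta \;=\; 2\sum_{j=1}^n \bigl(\zeta_j\,dB^1_j - \xi_j\,dB^2_j\bigr),
\end{equation}
i.e.\ the stochastic differential of $\eta$ has the same form as for the unshifted process but written in the new coordinates. From here I would read off the quadratic covariations: $d[\xi_k,\xi_k]=d[\zeta_k,\zeta_k]=dt$, all cross variations among the first $2n$ components vanish, $d[\xi_k,\eta]=2\zeta_k\,dt$, $d[\zeta_k,\eta]=-2\xi_k\,dt$, and $d[\eta,\eta]=4\sum_j(\xi_j^2+\zeta_j^2)\,dt$.

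Next, I would apply the standard It\^o formula to $f\in C^2(\mathbb R^{2n+1};\mathbb R)$ along $W$. The martingale part,
\begin{equation}
\sum_k \bigl(\partial_{x_k}f\,dB^1_k + \partial_{y_k}f\,dB^2_k\bigr) + \partial_t f\cdot 2\sum_k(\zeta_k\,dB^1_k-\xi_k\,dB^2_k),
\end{equation}
regroups termwise into $\sum_k\bigl((\partial_{x_k}f+2\zeta_k\partial_t f)\,dB^1_k+(\partial_{y_k}f-2\xi_k\partial_t f)\,dB^2_k\bigr)=\sum_k(X_kf\,dB^1_k+Y_kf\,dB^2_k)=\nabla_{\mathbb H}f\cdot dB$. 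For the drift, collecting all the $dt$ contributions from the second-order part yields
\begin{equation}
\tfrac12\sum_k\bigl(\partial_{x_k}^2 f+\partial_{y_k}^2 f + 4(\xi_k^2+\zeta_k^2)\partial_t^2 f + 4\zeta_k\partial_{x_k}\partial_t f - 4\xi_k\partial_{y_k}\partial_t f\bigr)\,dt.
\end{equation}
Expanding $X_k^2 f$ and $Y_k^2 f$ from $X_k=\partial_{x_k}+2y_k\partial_t$, $Y_k=\partial_{y_k}-2x_k\partial_t$ gives exactly this sum, so the drift equals $\tfrac12\Delta_{\mathbb H}f\,dt$.

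The argument is essentially bookkeeping; the only place one must be attentive is the cross-variation $d[\eta,\xi_k]$ and $d[\eta,\zeta_k]$, since these produce the mixed derivatives $\partial_{x_k}\partial_t f$ and $\partial_{y_k}\partial_t f$ that are exactly needed to complete the squares inside $X_k^2+Y_k^2$. Once this algebraic identification is recognized, integrating from $0$ to $t$ yields \eqref{eq:Itoformula}.
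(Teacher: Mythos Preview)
Your argument is correct: applying the classical It\^o formula in $\mathbb R^{2n+1}$ and then regrouping the coefficients via $X_k=\partial_{x_k}+2y_k\partial_t$, $Y_k=\partial_{y_k}-2x_k\partial_t$ is exactly the standard derivation, and your bookkeeping of the covariations $d[\xi_k,\eta]$, $d[\zeta_k,\eta]$, $d[\eta,\eta]$ is accurate. Note that the paper does not give its own proof of this lemma at all --- it simply cites \cite[Lemma~3.2]{BGKNT2020} --- so there is nothing to compare against; your write-up actually supplies what the paper outsources.
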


We are going to use the following lemma for 1-dimensional Brownian motion.
\begin{lem}[{\cite[Problem 1, p. 45]{McK1969}} ]\label{lemma:McKean}
If $e:[0,\infty]\to \mathbb R^d$ and $\sigma(t) = \int_0^t|e|^2(s) \ \textrm{d} s$,
then $a(s) = \int_0^{\sigma^{-1}(s)}e(q) \,\cdot\, \textrm{d} B(q)$
is 1-dimensional Brownian motion.
\end{lem}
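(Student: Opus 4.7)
The plan is to recognize Lemma \ref{lemma:McKean} as a special case of the Dambis--Dubins--Schwarz time-change theorem, and to prove it in essentially two steps: identify $a$ as a time-changed continuous martingale, and then verify the hypotheses of L\'evy's characterization of Brownian motion.

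First I would fix the underlying Brownian motion $B$ on a filtered probability space $(\Omega,\mathcal F,(\mathcal F_t),\mathbb P)$ satisfying the usual conditions and set $M(t) = \int_0^t e(q)\cdot \textrm{d} B(q)$. Provided $e$ is measurable with $\int_0^t |e|^2\,\textrm{d} s < \infty$ for all $t$, $M$ is a continuous local martingale starting at $0$ with quadratic variation $\langle M\rangle_t = \int_0^t |e(q)|^2\,\textrm{d} q = \sigma(t)$. Since $\sigma$ is nondecreasing and (under the implicit assumption that $e$ does not vanish on a set of positive measure, so that $\sigma$ is strictly increasing to $\infty$) has a well-defined continuous inverse $\sigma^{-1}:[0,\infty)\to[0,\infty)$, the process $a(s) := M(\sigma^{-1}(s))$ is well defined and continuous.

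Next I would introduce the time-changed filtration $\mathcal G_s := \mathcal F_{\sigma^{-1}(s)}$; each $\sigma^{-1}(s)$ is a stopping time for $(\mathcal F_t)$ (being the hitting time of level $s$ by the continuous nondecreasing $\sigma$), so by optional stopping $a$ is a $(\mathcal G_s)$-local martingale with quadratic variation
\[
\langle a\rangle_s = \langle M\rangle_{\sigma^{-1}(s)} = \sigma(\sigma^{-1}(s)) = s.
\]
L\'evy's characterization then forces $a$ to be a standard one-dimensional Brownian motion with respect to $(\mathcal G_s)$, which is exactly the claim.

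The only nontrivial point in the plan is the time-change step: one has to check carefully that $\sigma^{-1}(s)$ is an $(\mathcal F_t)$-stopping time and that the quadratic variation behaves correctly under this change of clock. This is the content of the Dambis--Dubins--Schwarz theorem, and once that machinery is invoked the remainder is essentially a computation. A boundary case worth flagging is when $e$ vanishes on intervals (making $\sigma$ constant there), in which case one defines $\sigma^{-1}(s)$ as the right-continuous inverse; the argument is unchanged since $M$ is constant on the flat pieces of $\sigma$.
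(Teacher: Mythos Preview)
The paper does not supply its own proof of this lemma; it is quoted as a known fact from McKean with only a citation. Your argument via the Dambis--Dubins--Schwarz time change and L\'evy's characterization is the standard route and is correct: setting $M(t)=\int_0^t e\cdot\textrm{d}B$ gives a continuous local martingale with $\langle M\rangle_t=\sigma(t)$, each $\sigma^{-1}(s)$ is an $(\mathcal F_t)$-stopping time, and the time-changed process has quadratic variation $s$, so L\'evy's criterion finishes. The only caveats you already flagged---that $e$ must be progressively measurable with $\sigma(t)<\infty$ for all $t$ and $\sigma(\infty)=\infty$, and the handling of flat pieces via the right-continuous inverse---are exactly the hypotheses implicit in the statement as used later in the paper, where $e$ is built from $\nabla_{\mathbb H}f_1(W(\cdot))$ and a unit vector after a stopping time.
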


And we will use the following time change formula for It\^o integrals.
\begin{thm}[{\cite[Theorem 8.5.7, p. 156]{O2003}} ]\label{theorem:Oksendal}
Suppose $c(s,\omega)$ and $a(s,\omega)$ are $s$-continuous almost surely, $a(0,\omega)=0$ a.s., and that $E |a_t|<\infty$.
Let $B(s)$ be a $d$-dimensional Brownian motion and $d$-vector $v(s,\omega)$ be bounded and $s$-continuous.
Define 
$$
\breve B(s) = \int_0^{a(t)} \sqrt{c(s)} \ \textrm{d}  B(s).
$$
Then $\breve B(s)$ is a Brownian motion and
$$
\int_0^{a(t)} v(s)\, \cdot\, \textrm{d}  B(s) = \int_0^{t} v(a(r))\sqrt{a'(r)} \, \cdot\,  \textrm{d} \breve B(r) \quad \text{a. s.},
$$
where $a'(r)$ is the derivative of $a(r)$ w.r.t. r, so that 
$$
a'(r) = \frac{1}{c(a(r))} \quad \text{ for almost all } r, \text{ a. s.}
$$
\end{thm}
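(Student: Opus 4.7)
The plan is to treat the two assertions separately, both resting on L\'evy's characterisation of Brownian motion and the associated time-change theorem for continuous local martingales. Since the dot product decomposes componentwise and the coordinate Brownian motions $B^i$ are independent, I would first reduce to the scalar case $d=1$ and handle the general case by summing over components.

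\emph{That $\breve B$ is a Brownian motion.} Introduce the continuous local martingale $M(u) = \int_0^u \sqrt{c(s)}\, \textrm{d} B(s)$, which has quadratic variation $\langle M \rangle_u = \int_0^u c(s)\, \textrm{d} s$. Since $a(t)$ is a.s.\ continuous (and, in view of the asserted derivative formula $a'=1/c$, nondecreasing with $c>0$), the composition $\breve B(t) = M(a(t))$ is a continuous local martingale with $\langle \breve B \rangle_t = \int_0^{a(t)} c(s)\, \textrm{d} s$. Differentiating the requirement $\langle \breve B \rangle_t = t$ in $t$ yields $c(a(t))\, a'(t) = 1$, which simultaneously establishes the derivative formula for $a$ and the quadratic-variation identity $\langle \breve B \rangle_t = t$. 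L\'evy's characterisation then identifies $\breve B$ with a Brownian motion.

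\emph{The time-change identity.} Set $I(t) = \int_0^{a(t)} v(s)\cdot \textrm{d} B(s)$ and $J(t) = \int_0^t v(a(r))\sqrt{a'(r)}\cdot \textrm{d} \breve B(r)$. Both are continuous local martingales starting at $0$, so it suffices to show $\langle I - J \rangle_t \equiv 0$. I would compute
\[
\langle I \rangle_t = \int_0^{a(t)} |v(s)|^2\, \textrm{d} s, \qquad \langle J \rangle_t = \int_0^t |v(a(r))|^2\, a'(r)\, \textrm{d} r,
\]
and the change of variable $s = a(r)$ (using $\textrm{d} s = a'(r)\, \textrm{d} r$) identifies these two expressions. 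The cross-variation $\langle I, J \rangle_t$ admits the same computation (via the analogous identity $\textrm{d} \breve B(r) = \sqrt{c(a(r))}^{-1}\,\textrm{d}\cdot$ under the time change), yielding $\langle I-J \rangle_t = 0$ and hence $I = J$ almost surely.

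The main obstacle is not the algebra above but the careful measurability set-up: one must verify that the integrand $v(a(r))\sqrt{a'(r)}$ is adapted to the filtration generated by $\breve B$, and that $a$ defines a bona fide stopping-time-valued change of time so the DDS-type time-change theorems apply. The hypotheses of $s$-continuity almost surely, $a(0,\omega) = 0$ a.s., and $E|a_t| < \infty$ are used exactly to justify these adaptedness and integrability conditions when passing from simple integrands to general bounded continuous $v$.
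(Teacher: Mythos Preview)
The paper does not prove this theorem: it is quoted from {\O}ksendal's textbook and invoked as a black box later in the proof of the main result. There is therefore no in-paper argument to compare your proposal against.

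Your outline is the quadratic-variation route and is sound in spirit, but one step is thinner than it looks. The computations of $\langle I\rangle_t$ and $\langle J\rangle_t$ are fine; the cross term $\langle I,J\rangle_t$, which you dispatch as ``the same computation'', is not automatic because $I$ is written as an integral against $B$ while $J$ is written against $\breve B$. To evaluate it you need the time-change behaviour of quadratic covariation: writing $I(t)=N(a(t))$ and $\breve B(t)=M(a(t))$ with $N(u)=\int_0^u v\,\mathrm dB$ and $M(u)=\int_0^u\sqrt{c}\,\mathrm dB$, one has $\langle I,\breve B\rangle_t=\langle N,M\rangle_{a(t)}=\int_0^{a(t)} v(s)\sqrt{c(s)}\,\mathrm ds$, whence $\mathrm d\langle I,\breve B\rangle_r=v(a(r))\sqrt{a'(r)}\,\mathrm dr$ via $c(a(r))a'(r)=1$, and then $\langle I,J\rangle_t=\int_0^t v(a(r))\sqrt{a'(r)}\,\mathrm d\langle I,\breve B\rangle_r=\int_0^t|v(a(r))|^2 a'(r)\,\mathrm dr$, matching $\langle I\rangle_t$ and closing the argument. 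For comparison, {\O}ksendal's own proof in the cited reference takes a different path: it verifies the identity first for elementary (step) integrands by a direct finite-sum computation and then extends to bounded continuous $v$ by $L^2$-approximation. Your approach trades that approximation step for the covariation calculation above; both are legitimate, and your closing remark about the adaptedness and stopping-time issues correctly identifies where the real technical work lies in either version.
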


\section{Brownian motion and the Dirichlet problem} 
A function $u:\mathbb H^n\to\mathbb R$ is called harmonic if $\Delta_{\mathbb H}u=0$.
In this section we follow \cite{GV1985} and \cite[Theorem 2.1]{G1977} to obtain 
\begin{thm}\label{theorem:Brownian-Dirichlet}
Let $U$ be an open set in $\mathbb H^n$ and $\varphi$ be a bounded continuous function on $\partial U$.
Let $S_U$ be the first exit time from $U$. Define function
\begin{equation}\label{eq:Eu}
u(g) = E(\varphi(W(S_U)) \mid W(0) = g), \quad g\in U.
\end{equation} 
Then $u$ is harmonic in $U$.
Moreover, if $U$ is a regular domain, then function \eqref{eq:Eu} solves the Dirichlet problem
\begin{equation}
\begin{cases}
\Delta_{\mathbb H} u = 0, \quad \text{ in } U,\\
u|_{\partial U} = \varphi \quad \text{ on } \partial U.
\end{cases}
\end{equation}
\end{thm}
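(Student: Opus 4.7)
The plan is to establish harmonicity in $U$ via the strong Markov property combined with It\^o's formula, and then to obtain the boundary behaviour directly from the regularity hypothesis.

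First I would show that $u$ satisfies a sub-Laplacian mean value property. Fix $g\in U$, pick a small Kor\'anyi ball $B=B(g,r)$ with $\overline{B}\subset U$, and let $\tau$ be the first exit time of $W$ from $B$. Since $\tau\le S_U$ and $W$ is strong Markov, conditioning on $\mathcal{F}_\tau$ gives
\[
u(g)=E\bigl[\varphi(W(S_U))\mid W(0)=g\bigr]
=E\bigl[E[\varphi(W(S_U))\mid \mathcal{F}_\tau]\mid W(0)=g\bigr]
=E\bigl[u(W(\tau))\mid W(0)=g\bigr].
\]

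Next I would convert this into the differential equation $\Delta_{\mathbb H}u=0$. The cleanest route is to invoke hypoellipticity of $\Delta_{\mathbb H}$ (H\"ormander's theorem applies since $X_j,Y_j$ and their commutators $[X_j,Y_j]=4T$ span the tangent space): on small gauge balls, for which the Dirichlet problem is classically solvable, the Brownian representation shows $u$ coincides with the classical harmonic extension of its boundary values, so $u$ is smooth in $U$. Once $u\in C^2$, I apply Lemma \ref{lemma:Ito} between $0$ and $\tau$:
\[
u(W(\tau))-u(g)=\int_0^\tau \nabla_{\mathbb H}u(W(s))\cdot dB(s)+\tfrac12\int_0^\tau \Delta_{\mathbb H}u(W(s))\,ds.
\]
Taking expectation, the stochastic integral has mean zero by optional stopping, and the mean value identity yields $E\bigl[\int_0^\tau \Delta_{\mathbb H}u(W(s))\,ds\mid W(0)=g\bigr]=0$. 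Dividing by $E[\tau\mid W(0)=g]$ and sending $r\to 0^+$, continuity of $\Delta_{\mathbb H}u$ forces $\Delta_{\mathbb H}u(g)=0$.

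For the boundary behaviour under the regularity hypothesis on $U$, regularity of a point $g_0\in\partial U$ means precisely that $S_U\to 0$ in probability as the starting point $g\to g_0$. Combined with sample-path continuity of $W$, this gives $W(S_U)\to g_0$ along $g\to g_0$; boundedness of $\varphi$ permits dominated convergence, and continuity of $\varphi$ at $g_0$ then yields $u(g)\to\varphi(g_0)$.

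The main obstacle is the smoothness step. Without invoking a classical PDE regularity result, the passage from the mean value property to the pointwise identity $\Delta_{\mathbb H}u=0$ is delicate in the sub-Riemannian setting: one must either call on hypoellipticity of the sub-Laplacian or independently build, via potential theory on $\mathbb H^n$, a cofinal family of regular subdomains on which the Dirichlet problem can be solved classically, and then identify $u$ locally with those solutions.
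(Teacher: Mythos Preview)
Your overall architecture matches the paper's: strong Markov to get an averaging identity over small Kor\'anyi balls (the paper's Lemma~\ref{lemma:domain-ball}), conversion of that identity into $\Delta_{\mathbb H}u=0$, and then boundary behaviour from regularity (the paper's Lemma~\ref{lemma:regular}). The substantive difference is in the middle step. The paper computes the exit distribution from a Kor\'anyi ball explicitly (Lemma~\ref{lemma:harmonic-measure}), identifying it with the known sub-Laplacian harmonic measure, and then argues via a Taylor expansion (Lemma~\ref{lemma:mean-value}) that the resulting surface mean-value property forces $\Delta_{\mathbb H}u=0$. You instead appeal to hypoellipticity: solve the classical Dirichlet problem on small balls, identify $u$ with that smooth solution via the probabilistic representation, and then run It\^o plus a scaling argument. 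Your route avoids the explicit harmonic-measure formula but imports H\"ormander's theorem and the solvability of the Dirichlet problem on gauge balls; the paper's route is more self-contained but, as written, its Lemma~\ref{lemma:mean-value} tacitly uses a Taylor expansion of $u$ before smoothness has been established, so your explicit flagging of the smoothness obstacle is well placed. One point where you are quicker than the paper: you assert that regularity of $g_0$ gives $S_U\to 0$ in probability as $g\to g_0$, but this is exactly what the paper spends Lemma~\ref{lemma:semicontinuous} and the first half of Lemma~\ref{lemma:regular} proving, via lower semicontinuity of $g\mapsto P_g(S_U\le t)$; you should at least cite that equivalence rather than treat it as the definition.
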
 

The point $g_0\in\partial U$ is said to be a \textit{regular point} if $P(S_U=0 \mid W(0)=g_0)=1$
and $U$ is \textit{regular open set} if all points of $\partial U$ are regular.

\begin{lem}\label{lemma:domain-ball}
If $u$ is defined by \eqref{eq:Eu}, then for any $g\in U$ and stopping time $s_0\leq S_U$ we have
$$
u(g) = E(u(W(s_0)) \mid W(0) = g).
$$
\end{lem}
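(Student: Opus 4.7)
The plan is to reduce the identity to the strong Markov property for the horizontal Brownian motion $W(t)$. Since $\mathring{W}$ is defined as the solution of a SDE with smooth coefficients and horizontal Brownian motion on $\mathbb H^n$ is the Markov process with generator $\tfrac12\Delta_{\mathbb H}$, it is in particular a strong Markov process, and the translation $W(t)=g*\mathring W(t)$ preserves this property because left multiplication by $g$ is a diffeomorphism. This gives the tool we need: for any stopping time $s_0$ (with respect to the natural filtration $\mathcal F_t$) and any bounded measurable $F$ of the future path, $E(F(W(s_0+\cdot))\mid\mathcal F_{s_0})=E_{W(s_0)}(F)$.

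The key observation is that on the event $\{s_0\leq S_U\}$, the first exit time from $U$ of the shifted path $W(s_0+\cdot)$ is exactly $S_U-s_0$, so
\begin{equation}
\varphi(W(S_U))=\varphi\big(W(s_0+(S_U-s_0))\big).
\end{equation}
Applying the strong Markov property at time $s_0$, the conditional expectation given $\mathcal F_{s_0}$ only depends on $W(s_0)$, and it equals
\begin{equation}
E\big(\varphi(W(S_U))\mid \mathcal F_{s_0}\big)=E_{W(s_0)}\big(\varphi(\tilde W(\tilde S_U))\big)=u(W(s_0)),
\end{equation}
where $\tilde W$ denotes a fresh horizontal Brownian motion started at $W(s_0)$ and $\tilde S_U$ its exit time from $U$. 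This equality uses precisely the definition \eqref{eq:Eu} of $u$.

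The proof is then concluded by the tower property:
\begin{equation}
u(g)=E\big(\varphi(W(S_U))\mid W(0)=g\big)=E\Big(E\big(\varphi(W(S_U))\mid \mathcal F_{s_0}\big)\,\Big|\, W(0)=g\Big)=E\big(u(W(s_0))\mid W(0)=g\big).
\end{equation}
The step I expect to be the main technical obstacle is verifying that the strong Markov property genuinely applies here for the stopping time $s_0$: one has to check that $W(s_0)$ is $\mathcal F_{s_0}$-measurable (which requires right-continuity of the filtration, standard for diffusions) and that $u$ is Borel measurable, so that the inner expectation $u(W(s_0))$ is a well-defined random variable. Since $u$ was just shown in Theorem \ref{theorem:Brownian-Dirichlet} to be harmonic (and in particular continuous in $U$), measurability is automatic, and the rest is the standard strong Markov apparatus transplanted from $\mathbb R^{2n+1}$ to $\mathbb H^n$ via the left translation.
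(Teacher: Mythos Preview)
Your argument is essentially the same as the paper's: both invoke the (strong) Markov property at the stopping time $s_0$ to identify the inner conditional expectation with $u(W(s_0))$, and then apply the tower property to conclude. One small slip: you justify measurability of $u$ by citing Theorem~\ref{theorem:Brownian-Dirichlet}, but that theorem is proved \emph{using} this lemma, so the appeal is circular; measurability of $g\mapsto E_g(\varphi(W(S_U)))$ should instead be taken from the general theory of Feller/diffusion processes.
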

\begin{proof}
Let $\mathcal B_{s_0}$ be the $\sigma$-algebra of events previous to $s_0$, then,
by conditioning by $\mathcal B_{s_0}$ we obtain
$$
u(g) = E\big[ E(\varphi(W(S_U)) \mid \mathcal B_{s_0}) \mid W(0) = g\big].
$$
On the other hand the Markov property gives 
$$
E(\varphi(W(S_U)) \mid \mathcal B_{s_0}) = E(\varphi(W(S_U)) \mid W(0) = W(s_0))
$$
So 
$$
u(g) = E\big[ E(\varphi(W(S_U)) \mid W(0) = W(s_0)) \mid W(0) = g\big] 
= E(u(W(s_0)) \mid W(0) = g).
$$
\end{proof}
 
\begin{lem}\label{lemma:harmonic-measure}
Let $B(g_0,\rho_0)\subset\subset U$. Then the law of $W(S_{B(g_0,\rho_0)})$ knowing $W(0)=g_0$ is 
$$
P(W(S_{B(g_0,\rho_0)}) \in \textrm{d} \sigma(g) \mid W(0)=g_0) 
= \frac{2^{n-2}(\Gamma(\frac{1}{n}))^2}{\pi^{n+1}\rho_0^{2n}}\frac{2|z-z_0|^2}{\|\nabla\rho^4\|(g_0^{-1}*g)}\textrm{d} \sigma(g), 
$$
where $g=(z,t)$, $g_0=(z_0,t_0)$,  $\textrm{d} \sigma(g)$ is the euclidean area element on $\partial B(g_0, \rho_0)$, and $\|\nabla\rho^4\|(z,t) = (16|z|^6 + 4t^2)^{\frac{1}{2}}$.
\end{lem}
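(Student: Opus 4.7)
The plan is to identify the harmonic measure of the Korányi ball with the classical Poisson kernel for the Kohn sub-Laplacian $\Delta_{\mathbb H}$ on that ball, a formula which is explicitly known from the work of Gaveau \cite{G1977} and Greiner--Vagi \cite{GV1985}. The bridge between these two objects is Theorem \ref{theorem:Brownian-Dirichlet}, which identifies $u(g) = E(\varphi(W(S_{B(g_0,\rho_0)}))\mid W(0)=g)$ with the solution of the Dirichlet problem.

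First I would reduce to the case $g_0 = 0$. Since $W(t) = g_0 * \mathring{W}(t)$, the exit time $S_{B(g_0,\rho_0)}$ starting from $g_0$ coincides with the exit time of $\mathring W$ from the Korányi ball $B(0,\rho_0)$, and $g_0^{-1}*W(S_{B(g_0,\rho_0)}) = \mathring W(S_{B(0,\rho_0)})$. Thus it suffices to establish the formula at the origin, after which left translation by $g_0$ produces the stated $|z-z_0|^2$ and $\|\nabla\rho^4\|(g_0^{-1}*g)$ factors automatically.

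Next, I would verify that the Korányi ball $B(0,\rho_0)$ is a regular open set in the sense defined right after Theorem \ref{theorem:Brownian-Dirichlet}; this is standard since the boundary sphere admits barriers at every point except possibly the characteristic points (the north and south poles $(0,\pm\rho_0^2)$), and at those points regularity can be checked using the classical cone-type criterion, or simply cited from the harmonic-measure literature on $\mathbb H^n$. Granting regularity, Theorem \ref{theorem:Brownian-Dirichlet} says that for every continuous $\varphi$ on $\partial B(0,\rho_0)$, the function
\[
u(g) = E\bigl(\varphi(\mathring W(S_{B(0,\rho_0)})) \mid \mathring W(0) = g\bigr)
\]
is the unique harmonic extension of $\varphi$ into the ball. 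Evaluating at $g=0$ and comparing with the Poisson integral formula
\[
u(0) = \int_{\partial B(0,\rho_0)} P(0,g)\,\varphi(g)\,\textrm{d}\sigma(g),
\]
whose kernel $P(0,g)$ was computed explicitly in \cite{G1977,GV1985} to be exactly
\[
P(0,g) = \frac{2^{n-2}(\Gamma(\tfrac{1}{n}))^2}{\pi^{n+1}\rho_0^{2n}}\cdot\frac{2|z|^2}{\|\nabla\rho^4\|(g)},
\]
one concludes that the distribution of $\mathring W(S_{B(0,\rho_0)})$ must equal $P(0,\cdot)\,\textrm{d}\sigma$, since continuous $\varphi$ are measure-determining on the compact sphere. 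Translating back by $g_0$ yields the stated formula.

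The main obstacle is really just the input of the explicit Poisson kernel for the Kohn Laplacian on the Korányi ball; deriving it from scratch is a nontrivial computation (it rests on the explicit fundamental solution $\rho^{2-Q}$ of $\Delta_{\mathbb H}$ on $\mathbb H^n$ and a Kelvin-type inversion), but here we can simply invoke \cite{G1977,GV1985}. A secondary point is making sure that the regularity at the characteristic points of the Korányi sphere holds so that Theorem \ref{theorem:Brownian-Dirichlet} applies; once that is in hand, the identification of the exit law with the Poisson kernel is automatic from uniqueness of the Dirichlet solution.
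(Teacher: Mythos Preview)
Your argument is circular in the logical structure of this paper: Theorem~\ref{theorem:Brownian-Dirichlet} is \emph{proved using} Lemma~\ref{lemma:harmonic-measure} (see the short proof at the end of Section~3, which explicitly invokes Lemmas~\ref{lemma:domain-ball}, \ref{lemma:harmonic-measure}, and \ref{lemma:mean-value}), so you cannot appeal to Theorem~\ref{theorem:Brownian-Dirichlet} here. The paper instead argues directly, and the direct argument is in fact shorter than yours. It cites an external reference (\cite{BLU2007}, Theorem~7.2.9) for the existence of a unique $C^2$ solution $v$ of the Dirichlet problem on the Kor\'anyi ball together with the explicit mean-value formula at the center, and then applies the It\^o formula (Lemma~\ref{lemma:Ito}) to $v$. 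Since $\Delta_{\mathbb H}v=0$, the process $v(W(s))$ is a local martingale, and passing to the limit $s\to S_{B(g_0,\rho_0)}$ yields $E\bigl(h(W(S_{B(g_0,\rho_0)}))\mid W(0)=g_0\bigr)=v(g_0)$ for every continuous boundary datum $h$. Comparing with the Poisson integral for $v(g_0)$ identifies the exit law, since continuous functions separate measures on the sphere.

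This direct route also dissolves your concern about regularity of the Kor\'anyi sphere at its characteristic points: no boundary regularity for the probabilistic solution is needed, only that the classical solution exists and is $C^2$ in the interior, which is exactly what the cited result provides. So the fix is to drop the reduction to $g_0=0$ and the regularity discussion, and replace the invocation of Theorem~\ref{theorem:Brownian-Dirichlet} by the one-line It\^o computation applied to the known harmonic extension.
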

\begin{proof}
Let $h$ be a bounded continuous function on $\partial B(g_0, \rho_0)$. 
Consider the Dirichlet problem 
\begin{equation}\label{eq:Dirichlet2}
\begin{cases}
\Delta_{\mathbb H} v = 0, \quad \text{ in } B(g_0, \rho_0),\\
v|_{\partial B(g_0, \rho_0)} = h \quad \text{ on } \partial B(g_0, \rho_0).
\end{cases}
\end{equation}
Then there exists a unique solution of \eqref{eq:Dirichlet2}, and the value in the center $g_0$ could be calculated via
\begin{equation}\label{eq:harmonic-measure}
v(g_0) = \int_{\partial B(g_0, \rho_0)} h(g) \ \textrm{d} \mu_{g_0}^{B(g_0,\rho_0)}(g)
\end{equation}
with 
$$
\textrm{d} \mu_{g_0}^{B(g_0,\rho_0)}(g) 
= \frac{2^{n-2}(\Gamma(\frac{1}{n}))^2}{\pi^{n+1}\rho_0^{2n}}\frac{|z-z_0|^2}{\big(4|z-z_0|^6+(t-t_0 -2\operatorname{Im}\sum_{j=1}^nz_j\overline{z^0_j})^2 \big)^{\frac{1}{2}}}\textrm{d} \sigma(g),
$$
see \cite[Theorem 7.2.9]{BLU2007}.

Now, let $v$ be the solution of \eqref{eq:Dirichlet2}. 
Then $v\in C^2(B(g_0,\rho_0))$ and we can apply the It\^o formula (making use $\Delta_{\mathbb H}v=0$):
$$
v(W(s)) = v(W(0)) + \sum_{j=1}^{n} \int_0^s X_jv(W(t)) \ \textrm{d} B^1_j(t) + \int_0^s Y_jv(W(t)) \ \textrm{d} B^2_j(t)
$$
Then almost surely 
\begin{multline*}
h(W(S_{B(g_0,\rho_0)})) = \lim_{s\to S_{B(g_0,\rho_0)}} v(W(s)) \\
=  v(W(0)) + \sum_{j=1}^{n} \int_0^{S_{B(g_0,\rho_0)}} X_jv(W(t)) \ \textrm{d} B^1_j(t) + \int_0^{S_{B(g_0,\rho_0)}} Y_jv(W(t)) \ \textrm{d} B^2_j(t).
\end{multline*}
It follows that 
$$
E(h(W(S_{B(g_0,\rho_0)})) \mid W(0) = g_0) = v(g_0).
$$
Thus, combining the last equation with \eqref{eq:harmonic-measure} and noting that $h$ was arbitrary we get the result.
\end{proof} 

\begin{lem}\label{lemma:mean-value}
Let $u$ be a bounded function such that for any $g_0\in U$, any $\rho_0\leq\varepsilon$ sufficiently small, we have the mean value property
\begin{equation}\label{eq:mean-value}
u(g_0) = \int_{\partial B(g_0, \rho_0)} h(g) \ \textrm{d} \mu_{g_0}^{B(g_0,\rho_0)}(g).
\end{equation}
Then $u$ is $C^{\infty}$ function and satisfies $\Delta_{\mathbb H}u=0$ in $U$.
\end{lem}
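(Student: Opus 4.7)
The plan is to first upgrade $u$ to $C^\infty$ by representing it as the convolution of itself with a smooth compactly supported kernel built from the harmonic measures of Lemma \ref{lemma:harmonic-measure}, and then to apply It\^o's formula (Lemma \ref{lemma:Ito}) together with the mean value hypothesis to deduce $\Delta_{\mathbb H}u=0$ pointwise.

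For the smoothing stage, I would fix $g_0\in U$ and pick $\varphi\in C_c^\infty((0,\varepsilon))$ with $\int_0^\varepsilon \varphi(r)\,dr=1$, where $\varepsilon$ is small enough that \eqref{eq:mean-value} holds at every $\rho_0\le\varepsilon$. Multiplying \eqref{eq:mean-value} by $\varphi(\rho_0)$, integrating over $\rho_0$, translating the sphere to the origin via left invariance of $\rho$ and $\Delta_{\mathbb H}$, and applying the Euclidean coarea formula with the Kor\'anyi gauge produces
\begin{equation*}
u(g_0) \;=\; \int_{\mathbb H^n} u(g)\,\psi(g_0^{-1}*g)\,dg,\qquad \psi(z,t) = c_n\, \varphi(\rho(z,t))\,\frac{|z|^2}{\rho(z,t)^{2n+3}},
\end{equation*}
where $c_n$ is an explicit dimensional constant and the coarea Jacobian $|\nabla\rho|=\|\nabla\rho^4\|/(4\rho^3)$ cancels the factor $\|\nabla\rho^4\|$ in the harmonic-measure density. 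Since $\varphi$ vanishes near $0$ and $\rho$ is smooth off the origin, one checks $\psi\in C_c^\infty(\mathbb H^n)$. Because $u$ is bounded and $\psi$ is smooth with compact support, the right-hand side may be differentiated under the integral sign in $g_0$ to any order, so $u$ agrees with a $C^\infty$ function near $g_0$; as $g_0$ was arbitrary, $u\in C^\infty(U)$.

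For the harmonicity stage, I would fix $g_0\in U$ and a small $\rho_0$ with $\overline{B(g_0,\rho_0)}\subset U$, and let $W$ be a horizontal Brownian motion started at $g_0$, stopped at $S:=S_{B(g_0,\rho_0)}$. Applying Lemma \ref{lemma:Ito} to $u$, and using that $\nabla_{\mathbb H}u$ is bounded on $\overline{B(g_0,\rho_0)}$ with $E[S]<\infty$ so the stochastic integral is a mean-zero martingale, taking expectations gives
\begin{equation*}
E[u(W(S))]-u(g_0) \;=\; \tfrac12\,E\!\left[\int_0^S \Delta_{\mathbb H}u(W(s))\,ds\right].
\end{equation*}
By Lemma \ref{lemma:harmonic-measure} together with the mean value hypothesis, the left-hand side vanishes; by continuity of $\Delta_{\mathbb H}u$ with modulus $\omega$, the right-hand side differs from $\Delta_{\mathbb H}u(g_0)\,E[S]$ by at most $\omega(\rho_0)\,E[S]$. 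Dividing by $E[S]>0$ and letting $\rho_0\to 0$ gives $\Delta_{\mathbb H}u(g_0)=0$.

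The main obstacle I anticipate is the coarea computation in the first stage: one must verify that the weight $|z|^2/\rho^{2n+3}$ arising after cancellation, though singular at the origin of $\mathbb H^n$, combines with the cutoff $\varphi(\rho)$ supported away from $0$ and with smoothness of the Kor\'anyi gauge off the origin to yield a genuinely $C_c^\infty$ kernel on the group.
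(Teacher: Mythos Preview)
Your argument is correct and takes a genuinely different route from the paper. The paper simply writes a second-order Taylor expansion of $u$ around $g_0$, plugs it into \eqref{eq:mean-value}, and observes that by the symmetry of the measure $\mu_{g_0}^{B(g_0,\rho_0)}$ all first-order and mixed second-order terms integrate to zero, leaving $\tfrac12\Delta_{\mathbb H}u(g_0)\int x_1^2\,d\mu + O(\rho_0^3)=0$; dividing by the second moment and letting $\rho_0\to 0$ gives $\Delta_{\mathbb H}u(g_0)=0$. This is shorter but tacitly presupposes enough regularity to Taylor-expand, so the $C^\infty$ claim is not actually argued. Your convolution step, by contrast, genuinely upgrades a merely bounded $u$ to $C^\infty$: the coarea cancellation you were worried about does work, since $|\nabla\rho|=\|\nabla\rho^4\|/(4\rho^3)$ kills the $\|\nabla\rho^4\|$ in the harmonic-measure density and produces the smooth compactly supported kernel $\varphi(\rho)\,|z|^2/\rho^{2n+3}$, and the representation $u(g_0)=\int u(g)\,\psi(g_0^{-1}*g)\,dg$ then holds uniformly for $g_0$ in any compact subset of $U$ (choose $\varepsilon$ below the distance to $\partial U$), which is what justifies differentiating in $g_0$. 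For the harmonicity step you replace the Taylor-plus-symmetry computation by It\^o's formula and Lemma~\ref{lemma:harmonic-measure}, which is a clean alternative and avoids having to identify which moments of $\mu_{g_0}^{B}$ vanish; the price is that you must invoke $E[S_{B(g_0,\rho_0)}]<\infty$ and the martingale property of the stochastic integral, but both are routine on a bounded ball with $\nabla_{\mathbb H}u$ continuous on the closure.
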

\begin{proof}
Let $g_0\in U$ and $\varepsilon$ be small, then by the Taylor formula
\begin{equation}\label{eq:Taylor-mean}
u(g) = u(g_0) + P_2(u,g_0)(g) + \mathcal O((\rho(g^{-1}_0*g))^3).
\end{equation}
Now we place \eqref{eq:Taylor-mean} inside \eqref{eq:mean-value}.
Due to symmetry all first order terms and second order terms with mixed derivatives will give $0$.
So we have
$$
u(g_0) = u(g_0) + \frac{1}{2}\sum_{j=1}^n\big(X^2_ju(g_0) + Y^2_ju(g_0) \big)\int_{\partial B(g_0, \rho_0)} x_1^2 \ \textrm{d} \mu_{g_0}^{B(g_0,\rho_0)}(g) + \mathcal O (\varepsilon^3)
$$
or
$$
\frac{1}{2}\sum_{j=1}^n\big(X^2_ju(g_0) + Y^2_ju(g_0) \big)\cdot\frac{1}{\varepsilon^2}\cdot\int_{\partial B(g_0, \rho_0)} x_1^2 \ \textrm{d} \mu_{g_0}^{B(g_0,\rho_0)}(g) = o(1). 
$$
The integral in the last equation is of order $\varepsilon^2$.
Thus we obtain $\Delta_{\mathbb H}u=0$ in $U$.
\end{proof}

\begin{lem}\label{lemma:semicontinuous}
For $t>0$, the function $g\mapsto P_g(S_U\leq t)$ is lower semicontinuous on $\mathbb H^n$:
$$
\liminf_{g\to g_0} P_g(S_U\leq t) \geq P_{g_0}(S_U\leq t)
$$  
\end{lem}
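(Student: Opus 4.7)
The plan is to realize the probability as an expectation of a path-dependent indicator and then transfer semicontinuity from integrand to integral via Fatou's lemma. By the group structure of $\mathbb H^n$, the Brownian motion starting at $g$ can be written as $W(s)=g*\mathring W(s)$, where $\mathring W$ is the canonical horizontal Brownian motion issued from the origin, so that
\[
P_g(S_U\le t)=E\bigl[\mathbf 1_{A_t(g)}\bigr],\qquad A_t(g):=\bigl\{\exists s\in(0,t]:\,g*\mathring W(s)\in U^c\bigr\}.
\]
A naive application of Fatou's lemma to $\mathbf 1_{A_t(g)}$ would go the wrong way: because $U^c$ is closed, the pathwise function $g\mapsto\mathbf 1_{A_t(g)}$ is only upper semicontinuous. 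The key idea is therefore to replace $U^c$ by the open set $V:=(\overline U)^c$ and to argue that, modulo a null event, $A_t(g)$ coincides with
\[
B_t(g):=\bigl\{\exists s\in(0,t):\,g*\mathring W(s)\in V\bigr\}.
\]

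For each fixed $\omega$, the set $\{g\in\mathbb H^n:\,B_t(g)\text{ holds}\}$ is open: openness of $V$ together with continuity of left multiplication implies that if $g_0*\mathring W(s_0,\omega)\in V$ for some $s_0<t$, the same holds for all $g$ in a neighborhood of $g_0$. Hence $g\mapsto\mathbf 1_{B_t(g)}(\omega)$ is pathwise lower semicontinuous, and Fatou's lemma yields
\[
\liminf_{g\to g_0}E\bigl[\mathbf 1_{B_t(g)}\bigr]\ge E\bigl[\mathbf 1_{B_t(g_0)}\bigr],
\]
which, combined with the identification $P_g(S_U\le t)=E[\mathbf 1_{A_t(g)}]=E[\mathbf 1_{B_t(g)}]$, delivers the claimed inequality.

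The identification of $A_t(g)$ with $B_t(g)$ up to a null set rests on two ingredients. First, the law of $S_U$ has no atom at $t>0$, so the boundary case $S_U=t$ is negligible; this follows from the smoothness of the transition density of $\mathring W$, itself a consequence of the hypoellipticity of $\Delta_{\mathbb H}$. Second, a non-stickiness statement is needed: if $g*\mathring W$ enters $U^c$ at some time $s_0<t$, then almost surely it enters $V$ at a time in $(s_0,t)$. By the strong Markov property at $s_0$ this reduces to the assertion that, starting from any $y\in U^c$, the horizontal Brownian motion visits $V$ in arbitrarily short positive time. Blumenthal's $0$--$1$ law dichotomizes this probability between $0$ and $1$, and smoothness of the transition density excludes the value $0$ in the relevant case $y\in\overline V$; in the degenerate situation $V=\emptyset$, the set $U^c$ is polar for $\mathring W$ and the lemma is trivial.

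The main obstacle I foresee is the non-stickiness claim, which requires some care with the sub-Riemannian potential theory for horizontal Brownian motion on $\mathbb H^n$. Once this is in place, the remainder of the argument---openness of $V$, continuity of left multiplication in the group, and Fatou's lemma---is routine.
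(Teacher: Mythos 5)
The paper states this lemma without proof, so there is nothing to compare against except the standard argument it implicitly relies on; judged on its own merits, your proposal has a genuine gap. The claimed almost-sure identification of $A_t(g)=\{\exists s\in(0,t]:\,g*\mathring W(s)\in U^c\}$ with $B_t(g)=\{\exists s\in(0,t):\,g*\mathring W(s)\in V\}$, where $V=(\overline U)^c=\operatorname{int}(U^c)$, is false for a general open set $U$, and the lemma is applied in the paper to a general domain. Two concrete failures: (1) take $U=B(0,2)\setminus L$ with $L$ a compact set of empty interior that is not polar (already in $\mathbb R^2$ a line segment; on $\mathbb H^n$ a piece of a smooth hypersurface, which the hypoelliptic diffusion hits with positive probability). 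Then $V=(\overline{B(0,2)})^c$, while the path may first exit $U$ by hitting $L$ deep inside the ball; the probability of subsequently reaching $V$ before time $t$ is strictly less than one, so $P_g(A_t)>P_g(B_t)$. Taking instead $U=\mathbb H^n\setminus L$ gives $V=\emptyset$ with $U^c=L$ non-polar, contradicting the dichotomy you invoke in the degenerate case. (2) The ``non-stickiness'' reduction is unsound even when the exit point $y=W(S_U)$ happens to lie on $\partial V$: entering an open set immediately from one of its boundary points is not automatic (Lebesgue-thorn type cusps give $P_y(\sigma_V=0)=0$ by Blumenthal's law), and smoothness of the transition density only yields $P_y(W(s)\in V)>0$ for each fixed $s>0$, not a lower bound that survives $s\downarrow 0$. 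Moreover $y$ need not lie anywhere near $\overline V$, as example (1) shows.

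The standard route, which the cited sources use and which repairs your idea, applies the Markov property at a small positive time rather than replacing $U^c$ by its interior. Set $\phi_r(y)=P_y\big(\exists s\in[0,r]:\,W(s)\in U^c\big)$. For $\epsilon>0$ the Markov property gives $P_g\big(\exists s\in[\epsilon,t]:\,W(s)\in U^c\big)=E_g\big[\phi_{t-\epsilon}(W(\epsilon))\big]$, and this is continuous in $g$ because the time-$\epsilon$ transition kernel is strongly Feller (smooth positive heat kernel, hypoellipticity of $\Delta_{\mathbb H}$). As $\epsilon\downarrow 0$ these events increase to $\{S_U\le t\}$ (closedness of $U^c$ and path continuity ensure $W(S_U)\in U^c$ when $S_U<\infty$), and an increasing limit of continuous functions is lower semicontinuous. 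Your instinct to transfer regularity from the heat kernel via Fatou is the right one, but it must be implemented this way; the passage to the open set $V$ cannot be salvaged without additional hypotheses on $\partial U$.
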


\begin{lem}\label{lemma:regular}
If $g_0\in\partial U$ is a regular point then 
$$
\lim_{g\to g_0} E(\varphi(W(S_U)) \mid W(0) = g) = \varphi(g_0).
$$
\end{lem}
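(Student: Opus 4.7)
The plan is to combine the regularity condition $P_{g_0}(S_U=0)=1$ with Lemma~\ref{lemma:semicontinuous} to force $S_U$ to be small with high probability when $g$ is close to $g_0$, so that the continuity of Brownian sample paths together with the continuity of $\varphi$ pins $\varphi(W(S_U))$ close to $\varphi(g_0)$.

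More concretely, I would fix $\varepsilon>0$ and use the continuity of $\varphi$ on $\partial U$ to choose $\delta>0$ with $|\varphi(g')-\varphi(g_0)|<\varepsilon$ for every $g'\in\partial U\cap B(g_0,\delta)$. Since $W(s)=g*\mathring W(s)$ and the Koranyi metric is left-invariant, the quantity $P_g\bigl(\sup_{s\le t}\rho(W(s),g)\ge \delta/2\bigr)=P\bigl(\sup_{s\le t}\rho(\mathring W(s),0)\ge \delta/2\bigr)$ is independent of $g$ and tends to $0$ as $t\to 0^+$ by continuity of sample paths; I would pick $t_0>0$ making it smaller than $\varepsilon$. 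Regularity of $g_0$ then gives $P_{g_0}(S_U\le t_0)=1$, and Lemma~\ref{lemma:semicontinuous} upgrades this to $P_g(S_U\le t_0)\to 1$ as $g\to g_0$.

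For $g$ with $\rho(g,g_0)<\delta/2$, I would decompose the sample space according to the event $A_g=\{S_U\le t_0\}\cap\{\sup_{s\le t_0}\rho(W(s),g)<\delta/2\}$. On $A_g$ the exit point satisfies $\rho(W(S_U),g_0)<\delta$ by the triangle inequality for $\rho$, so $|\varphi(W(S_U))-\varphi(g_0)|<\varepsilon$; on the complement the integrand is bounded by $2\|\varphi\|_\infty$, while $P_g(A_g^c)\le \varepsilon+P_g(S_U>t_0)$ converges to $\varepsilon$ as $g\to g_0$. Putting these together gives $\limsup_{g\to g_0}|E_g(\varphi(W(S_U)))-\varphi(g_0)|\le \varepsilon(1+2\|\varphi\|_\infty)$, and letting $\varepsilon\to 0$ concludes.

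The main conceptual step is the transfer from the almost-sure statement $S_U=0$ at the regular point $g_0$ to the approximate statement $S_U\le t_0$ at nearby $g$, which is precisely what Lemma~\ref{lemma:semicontinuous} is designed to supply. The other ingredients, namely uniform-in-$g$ control of path excursions away from the starting point and continuity of $\varphi$, come essentially for free from the left-invariant realization $W(s)=g*\mathring W(s)$ and the hypotheses on $\varphi$, so no real obstacle appears beyond carefully threading these estimates together.
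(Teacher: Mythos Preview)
Your proof is correct and follows essentially the same approach as the paper's: both use Lemma~\ref{lemma:semicontinuous} together with the regularity condition to make $P_g(S_U\le t_0)$ close to $1$, combine this with small-time control of path excursions, and then split the expectation into a good part (exit point near $g_0$) bounded by continuity of $\varphi$ and a bad part bounded by $2\|\varphi\|_\infty$ times a vanishing probability. The only cosmetic difference is that the paper packages the excursion control through the exit time $s_r$ from the ball $B(g_0,r)$ and proves the intermediate statement $\lim_{g\to g_0}P_g(S_U<s_r)=1$, whereas you go directly via the event $\{\sup_{s\le t_0}\rho(W(s),g)<\delta/2\}$ and exploit the left-invariant realization $W(s)=g*\mathring W(s)$ to get uniformity in $g$ for free; your formulation is arguably a bit cleaner on that point.
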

\begin{proof}
Let $g_0\in\partial U$ be a regular point.
For $r>0$, let $s_r$ be the exit time from $B(g_0,r)$ for $W(t)$.

First we will prove that 
\begin{equation}\label{eq:ouverts}
\lim_{ \substack{g\to g_0 \\ g\in U}}  P_g(S_U<s_r) = 1.
\end{equation} 
For any $g\in B(g_0,r)$ we have $P_g(s_r>0)=1$. 
Moreover, for any $\varepsilon>0$ there exist $\tau>0$ such that for any $g\in B(g_0,\frac{r}{2})$ holds 
$
P_q(s_r<\tau)<\varepsilon.
$ 
Fix $\varepsilon>0$ and let $\tau$ be such that the above is true.
Then we have
\begin{multline*}
P_g(S_U\leq s_r) = P_g(S_U\leq s_r, s_r\geq \tau) + P_g(S_U\leq s_r, s_r< \tau)\\
= P_g(S_U\leq \tau) + P_g(S_U\leq s_r, s_r< \tau) - P_g(S_U\leq \tau, s_r< \tau)\\
\geq P_g(S_U\leq \tau) - P_g(s_r< \tau) \geq P_g(S_U\leq \tau) - \varepsilon.
\end{multline*}
Now making use above inequality and the applying lemma \ref{lemma:semicontinuous} and the regularity of $g_0$ we derive
\begin{multline*}
 \limsup_{ \substack{g\to g_0 \\ g\in U}}  P_g(S_U<s_r) \geq  \liminf_{ \substack{g\to g_0 \\ g\in U}}  P_g(S_U<s_r) \\
\geq \liminf_{ \substack{g\to g_0 \\ g\in U}}  P_g(S_U<\tau) - \varepsilon 
\geq P_{g_0}(S_U<\tau) - \varepsilon = 1 - \varepsilon.
\end{multline*}
Since $\varepsilon>0$ was arbitrary, we obtain \eqref{eq:ouverts}.

For any $\varepsilon>0$ take $r>0$ so that for every $g_1\in B(g_0,r)\cap\partial U$ holds $|\varphi(g) - \varphi(g_0)|<\varepsilon$.
So
\begin{multline}
|E_g(\varphi(W(S_U))) - \varphi(g_0)| \leq E_g(|\varphi(W(S_U)) - \varphi(g_0)| ) \\
 < \varepsilon + E_g(\varphi(W(S_U)) \mid W(S_U)\not\in B(g_0,r)\cap\partial U)\\
 \leq \varepsilon + 2\max_{\partial U}|\varphi|\cdot P_g(W(S_U)\not\in B(g_0,r)\cap\partial U). 
\end{multline}
Thanks \eqref{eq:ouverts} we find a neighbourhood of $g_0$ so that 
$$
P_g\big(W(S_U)\not\in B(g_0,r)\cap\partial U\big) = P_g(S_U<s_r) < \frac{\varepsilon}{2\max\limits_{\partial U}|\varphi|}.
$$
That completes the proof.
\end{proof} 
 
\begin{proof}[Proof of theorem \ref{theorem:Brownian-Dirichlet}]
Lemmas \ref{lemma:domain-ball}, \ref{lemma:harmonic-measure}, and \ref{lemma:mean-value} ensure that function $u$ defined by \eqref{eq:Eu} is harmonic in $U$.
In the case of regular domain by lemma \ref{lemma:regular} $u$ attains boundary values. 
\end{proof} 
 
\section{Brownian path preserving mappings}

Let $U$ be a domain in $\mathbb H^n$.
A continuous mapping $f:U\to\mathbb H^p$ is said to be \textit{Brownian path preserving}
if for each $g_0\in U$ and for each horizontal Brownian motion $W(t)$ defined on $(\Omega,\mathcal F,P)$, started from $g_0$, there exist:

$(A)$ a mapping $\omega\mapsto\sigma_\omega$ on $\Omega$ such that for each $\omega$ $\sigma_\omega(t)$ is a continuous strictly increasing function on $[0,S_U]$
and such that for any $t>0$ the mapping $\omega\mapsto\sigma_\omega(t)$ is measurable on $\{t<S_U\}\subset\Omega$.
It is also required that for each $s$ the random variable $\sigma(s)$ be independent of the process $\{W^{-1}(s)*W(t) : t>s\}$.

$(B)$ a horizontal Brownian motion $W'(t)$ defined on $(\Omega', \mathcal F', P')$ in $\mathbb H^p$, started at $0$ such that  

$(C)$ on $(\Omega,\mathcal F,P)\times(\Omega', \mathcal F', P')$ the stochastic process $Z(s) = Z(\omega,\omega',s)$ defined for $s\geq 0$ by
$$
\begin{cases}
f(W(\sigma^{-1}(s))), & s<\sigma(S_U)=\lim_{t\to S_U} \sigma(t),\\
f(W(\sigma(S_U)))*W'(s-\sigma(S_U)), & s\geq\sigma(S_U)
\end{cases}
$$
is horizontal Brownian motion started at $f(g_0)$.

\begin{thm}\label{theorem:main}
Let $U$ be a domain in $\mathbb H^n$ and let $f:U\to \mathbb H^p$ 
be a non-constant continuous mapping. 
Then the following is equivalent:

$(i)$  $f$ is Brownian path preserving mapping;

$(ii)$ $f$ is harmonic morphism.
\end{thm}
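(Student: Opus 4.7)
The plan is to follow the Bernard--Campbell--Davie strategy adapted to the sub-Riemannian setting, treating the two implications by very different means.

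For $(ii)\Rightarrow(i)$, write $f=(u_1,v_1,\dots,u_p,v_p,h)$. Since each coordinate on $\mathbb{H}^p$ is $\Delta_{\mathbb{H}^p}$-harmonic (direct check using the formulas for $X_j,Y_j$), composing with the harmonic morphism $f$ gives that $u_j,v_j,h$ are all $\Delta_{\mathbb{H}}$-harmonic on $U$. Applying It\^o's formula (Lemma~\ref{lemma:Ito}) to each component yields continuous local martingales whose covariation is governed by the horizontal differential of $f$. Horizontal conformality --- the other half of the harmonic-morphism characterization --- produces a function $\lambda\ge 0$ on $U$ such that the joint quadratic variation of $(u_j(W),v_j(W))_{j=1}^p$ equals $\lambda^2(W(t))\,I_{2p}\,dt$. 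Setting the clock $\sigma(t)=\int_0^t\lambda^2(W(s))\,ds$ and invoking Lemma~\ref{lemma:McKean} componentwise, together with the change-of-variable formula of Theorem~\ref{theorem:Oksendal} for the joint picture, I would conclude that the first $2p$ coordinates of $f(W(\sigma^{-1}(s)))$ are a standard $2p$-dimensional Brownian motion starting at the horizontal part of $f(g_0)$. For the last coordinate, the contact identity~\eqref{eq:contact} rewrites $\int_0^t\nabla_{\mathbb{H}}h(W)\cdot dB$ as $2\sum_j\int_0^t v_j(W)\,d(u_j\circ W)-u_j(W)\,d(v_j\circ W)$; after the time change, expanding $u_j\circ W\circ\sigma^{-1}=u_j(g_0)+B'^1_j$ and similarly for $v_j$ produces, via the $\mathbb{H}^p$ group law, exactly the L\'evi area integral~\eqref{eq:Levi} shifted by $f(g_0)$. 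Thus $Z(s):=f(W(\sigma^{-1}(s)))$ is horizontal Brownian motion from $f(g_0)$ on $[0,\sigma(S_U))$, and part~(C) is completed by the independent Brownian motion $W'$ furnished in part~(B).

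For $(i)\Rightarrow(ii)$, I would show that $u\circ f$ is $\Delta_{\mathbb{H}}$-harmonic on $f^{-1}(V)$ for every harmonic $u$ on every open $V\subset\mathbb{H}^p$, which is precisely the defining property of a harmonic morphism. The vehicle is Lemma~\ref{lemma:mean-value}. Fix $g_0\in U$ and, using continuity of $f$, pick $\rho>0$ so small that $B(g_0,\rho)\subset U$ and $f(\overline{B(g_0,\rho)})$ lies inside a bounded ball $B'\subset V$ on which $u$ is bounded. Let $W$ be horizontal Brownian motion from $g_0$ and let $Z$ be the horizontal Brownian motion on $\mathbb{H}^p$ supplied by hypothesis. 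Since $\Delta_{\mathbb{H}^p}u=0$, It\^o's formula says $u(Z(s))$ is a local martingale, and it is bounded up to the stopping time $\sigma(S_{B(g_0,\rho)})$ because $Z$ remains inside $B'$ on this interval. Optional stopping combined with $Z(\sigma(t))=f(W(t))$ for $t<S_U$ gives
\[
u(f(g_0)) = E\!\left[u\!\left(Z\!\left(\sigma(S_{B(g_0,\rho)})\right)\right)\right] = E\!\left[(u\circ f)(W(S_{B(g_0,\rho)}))\right].
\]
Lemma~\ref{lemma:harmonic-measure} identifies the right-hand side with the integral of $u\circ f$ against the explicit harmonic measure $\mu_{g_0}^{B(g_0,\rho)}$, and Lemma~\ref{lemma:mean-value} then forces $\Delta_{\mathbb{H}}(u\circ f)=0$.

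The main obstacle is the possible degeneration of the clock in $(ii)\Rightarrow(i)$: to meet part~(A) of the definition, $\sigma$ must be strictly increasing, yet $\lambda$ may vanish on the critical set of $f$. Exactly as in the Riemannian situation one has to argue that horizontal Brownian motion spends zero Lebesgue time in $\{\lambda=0\}$ almost surely, which is where the interplay between the sub-Riemannian structure of $\mathbb{H}^n$ and the smoothness of the harmonic morphism is really needed; given only continuity of $f$ in the statement, some bootstrapping (harmonic morphisms are automatically smooth via the Dirichlet characterization above) is required first. A secondary, but not deep, technicality is the verification of the stochastic identity between $dh(W)$ and $2\sum_j v_j\,du_j-u_j\,dv_j$ promised by~\eqref{eq:contact}; once this is in place the identification with the L\'evi area integral~\eqref{eq:Levi} and the matching to $f(g_0)*\mathring{W}'(s)$ are algebraic.
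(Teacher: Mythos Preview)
Your plan is correct and tracks the paper's argument closely in both directions. The differences are matters of packaging rather than substance, but they are worth naming.

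For $(i)\Rightarrow(ii)$ you go straight for the surface mean-value identity on small Kor\'anyi balls and then invoke Lemma~\ref{lemma:mean-value}. The paper instead fixes a large ball $B(0,R)\subset\mathbb H^p$, exhausts $Q=f^{-1}(B(0,R))$ by $Q_m=Q\cap U_m$, and shows that each $v_m(g)=E_g\big(u\circ f(W(\text{exit time from }Q_m))\big)$ is harmonic by Theorem~\ref{theorem:Brownian-Dirichlet}, then passes to the limit. The paper's route has the mild advantage that the relevant stopping time $\psi$ (first exit of $Z$ from $B(0,R)$) is manifestly a stopping time for the filtration in which $Z$ is a Brownian motion, whereas in your optional-stopping step you are implicitly asserting that $\sigma(S_{B(g_0,\rho)})$ is such a stopping time; this is true via the time-changed filtration $\mathcal F^W_{\sigma^{-1}(s)}$, but it is a point one should make explicit.

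For $(ii)\Rightarrow(i)$ your outline coincides with the paper's: harmonic components via It\^o, horizontal conformality to identify covariations, the clock $\sigma(t)=\int_0^t|\nabla_{\mathbb H}f_1|^2(W(s))\,ds$, Lemma~\ref{lemma:McKean} for the first $2p$ coordinates, and the contact identity~\eqref{eq:contact} plus Theorem~\ref{theorem:Oksendal} to recognise the last coordinate as the L\'evy area. The paper adds one device you do not mention: it truncates via $\sigma_m$ and $Z^m$ (extending $\sigma$ linearly after the exit time $s_m$ from $U_m$ and gluing on the independent $W'$), proves that each $Z^m$ is horizontal Brownian motion, and then lets $m\to\infty$. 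This is exactly the localisation that makes the It\^o and time-change manipulations legitimate on a domain. Conversely, the obstacle you flag---that $\sigma$ must be \emph{strictly} increasing while $\lambda$ may vanish---is a genuine point which the paper simply asserts away (``This $\sigma$ satisfies condition $(A)$''); your proposed resolution (automatic smoothness of harmonic morphisms, then a polar-set argument for $\{\lambda=0\}$) is the right way to close that gap.
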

\begin{proof}
$(i)\Rightarrow(ii)$. 
Let $B(0,R)$ be a ball in $\mathbb H^p$, let $Q=f^{-1}(B(0,R))$, and let $g_0\in Q$.
Define $U_m=\{g\in U : \rho(g)<m, \rho(g, \mathbb H^n\setminus U)>\frac{1}{m}\}$, $Q_m=Q\cap U_m$.
Let $S_U$ be exit time from $U$ and $s_m$ be exit time from $U_m$.
Let $\psi$ be exit time of $Z$ from $B(0,R)$, then
$\theta:=\min\{\psi,\sigma(S_U)\}$ and $\theta_m:=\min\{\psi,\sigma(s_m)\}$ are stopping times.
Consider a harmonic function $u:\mathbb H^p\to\mathbb R$.
Then by theorem \ref{theorem:Brownian-Dirichlet} and lemma \ref{lemma:domain-ball} we have
$$
u\circ f(g_0) = u(f(g_0)) = E_{f(g_0)}\big(u(Z(\psi))\big) = E_{f(g_0)}\big(u(Z(\theta))\big).
$$
Then by the Lebesgue theorem 
\begin{multline*}
E_{f(g_0)}\big(u(Z(\theta))\big) = \lim_{m\to \infty}E_{f(g_0)}\big(u(Z(\theta_m))\big) \\
= \lim_{m\to \infty}E_{g_0}\big(u\circ f(W(\sigma^{-1}(\theta_m)))\big)\\
= \lim_{m\to \infty}E_{g_0}\big(u\circ f(W(\min\{\sigma^{-1}(\psi),s_m\}))\big).
\end{multline*}
Note that $\min\{\sigma^{-1}(\psi),s_m\}$ is the exit time from $Q_m$.
By theorem \ref{theorem:Brownian-Dirichlet} function $v_m(g) = E_{g}\big(u\circ f(W(\min\{\sigma^{-1}(\psi),s_m\}))\big)$
is harmonic in $Q_m$. Therefore $u\circ f$ is harmonic in $Q$. 
Since $R$ is arbitrary $u\circ f$ is harmonic on $U$, meaning that $f$ is a harmonic morphism.

$(ii)\Rightarrow(i)$. Let $f=(f_1,f_2,\dots,f_{2p+1}):U\to\mathbb H^p$ be a harmonic morphism, then the following holds true

\begin{align}
&\Delta_{\mathbb H}f_i=0, &\text{ for } i=1,\dots, 2p+1;\label{eq:hm1}\\
&\langle \nabla_{\mathbb H} f_i, \nabla_{\mathbb H} f_j\rangle = h(g)\cdot \delta_{i,j}, &\text{ for } i,j=1,\dots, 2p;\label{eq:hm2}\\
&f \text{ is a contact mapping}\label{eq:hm3}.
\end{align}
Define 
$$
\sigma(t) = \int_0^t|\nabla_{\mathbb H} f_1|^2(W(s)) \ \textrm{d} s, \quad 0\leq t\leq S_U.
$$
This $\sigma$ satisfies condition $(A)$. 
Let $U_m$ and $s_m$ be as in the previous part of the proof, and let 
$$
\sigma_m(t)=
\begin{cases}
\sigma(t), & t\leq s_m;\\
\sigma(s_m)+ t - s_m, & t>s_m. 
\end{cases}
$$
With $W'$ as in $(B)$ define a process $Z^m(s) = Z^m(\omega,\omega',s)$ by
$$
Z^m(s)=
\begin{cases}
Z(s), & s< \sigma(s_m);\\
f(W(s_m))*W'(s-\sigma(s_m)), & s\geq\sigma(s_m), 
\end{cases}
$$
where $Z(s)$ as in $(C)$. 
Then almost surely $Z^m$ is continuous for $s>0$, and $Z^m(s)\to Z(s)$ when $m\to\infty$ almost surely for each $s$.
We will prove that $Z^m(s)$ is a horizontal Brownian motion on $\mathbb H^p$, which will imply so is $Z(s)$. 

Fix $m$.
First we justify that $Z^m_j(s)$, $j=1,\dots,2p$ are 1-dimensional Brownian motions. 

By the It\^o formula (Lemma \ref{lemma:Ito})
$$
Z^m_1(\sigma_m(t)) =
\begin{cases}
f_1(W(t)) = f_1(W(0)) + \int_0^t \nabla_{\mathbb H} f_1(W(s)) \, \cdot\, \textrm{d} B(s), & t<s_m;\\
f_1(W(s_m)) + B'_1(\sigma_m(t) - \sigma(s_m)), & t>s_m,
\end{cases}
$$

and then 
$$
Z^m_1(s) =
\begin{cases}
f_1(W(0)) + \int_0^{\sigma_m^{-1}(s)} \nabla_{\mathbb H} f_1(W(q)) \, \cdot\, \textrm{d} B(q), & \sigma^{-1}_m(s)<s_m;\\
f_1(W(s_m)) + \tilde B^1_1(s-\sigma(s_m)), & \sigma^{-1}_m(s)>s_m.
\end{cases}
$$
Now we redefine the initial Brownian motion $W$ changing its first coordinate (and, consequently the last one ) after time $s_m$:
$\hat W(t) = W(t)$ when $t\leq s_m$ and $\hat B^1_1(t) = \tilde B^1_1(t-s_m)$ for $t> s_m$. Note that $\hat W$ is defined on the product $\Omega\times\Omega'$.
Then 
$$
Z^m_1(s) = f_1(W(0)) + \int_0^{\sigma_m^{-1}(s)} \nabla_{\mathbb H} f_1(W(q)) \, \cdot\, \textrm{d} \hat B(q) \quad \text{ when } \sigma^{-1}_m(s)<s_m.
$$
For $s\geq \sigma(s_m)$ it holds $s=\sigma^{-1}_m(s) + \sigma(s_m) - s_m$, and
\begin{multline*}
Z^m_1(s) = f_1(W(s_m)) + \tilde B^1_1(\sigma^{-1}_m(s) -s_m)\\
 = f_1(W(s_m)) + \hat B^1_1(\sigma^{-1}_m(s)) - \hat B^1_1(s_m) = f_1(W(s_m)) + \int_{s_m}^{\sigma_m^{-1}(s)} \ \textrm{d}  \hat B^1_1(q).
\end{multline*}
It follows 
$$
Z^m_1(s) = f_1(W(0)) + \int_0^{\sigma_m^{-1}(s)} e(q) \, \cdot\, \textrm{d} \hat B(q),
$$
where
$$
e(q) = 
\begin{cases}
\nabla_{\mathbb H} f_1(W(q)), & \text{ if } q< s_m;\\
e_1,  & \text{ if } q\geq s_m.
\end{cases}
$$
So, due to lemma \ref{lemma:McKean} $Z^m_1(s)$  is 1-dimensional Brownian motion. 
In the same manner we prove this fact for other horizontal coordinates $Z^m_j(s)$, $j=2,\dots,2p$.

Now we should prove that $Z^m_{2p+1}(s)$ is the L\'evi area integral \eqref{eq:Levi} of horizontal components.

So, with theorem \ref{theorem:Oksendal}  we have 
$$
\int_0^{\sigma_m^{-1}(s)} e_j(q) \, \cdot\, \textrm{d} \hat B(q) = \int_0^{s} e_j(r)\frac{1}{|e|(\sigma_m^{-1}(r))} \, \cdot\,  \textrm{d} \breve B(r).
$$
Therefore 
\begin{equation}\label{eq:dZj}
\textrm{d} Z^m_j(s) = e_j(s)\frac{1}{|e|(\sigma_m^{-1}(s))} \, \cdot\,  \textrm{d} \breve B(s).
\end{equation}

For the vertical component ( $j=2p+1$ ) we apply It\^o formula (taking into account \eqref{eq:hm1}) and then contact condition \eqref{eq:contact}, in the case $s\leq\sigma(s_m)$:
\begin{multline*}
Z^m_{2p+1}(\sigma_m(t)) = f_{2p+1}(W(t)) = f_{2p+1}(W(0)) + \int_0^t \nabla_{\mathbb H} f_{2p+1}(W(s)) \, \cdot\, \textrm{d} B(s) \\
= f_{2p+1}(W(0)) + \sum_{i=1}^{p} \int_0^t 2\sum_{j=1}^{p} 
(f_{2j}X_if_{2j-1} - f_{2j-1}X_if_{2j}) \ \textrm{d} B^1_i(s)\\
+ (f_{2j}Y_if_{2j-1} - f_{2j-1}Y_if_{2j}) \ \textrm{d} B^2_i(s)\\
= f_{2p+1}(W(0)) + 2\sum_{j=1}^{p} \int_0^t f_{2j}(W(s))\nabla_{\mathbb H}f_{2j-1}(W(s))\, \cdot\, \textrm{d} B(s)\\
- f_{2j-1}(W(s))\nabla_{\mathbb H}f_{2j}(W(s))\, \cdot\, \textrm{d} B(s).
\end{multline*}
So we have
\begin{multline}\label{eq:Z2p+1}
Z^m_{2p+1}(s) = f_{2p+1}(W(0)) + 2\sum_{j=1}^{p} \int_0^{\sigma^{-1}_m(s)} f_{2j}(W(q))\nabla_{\mathbb H}f_{2j-1}(W(q))\, \cdot\, \textrm{d} \hat B(q)\\
- f_{2j-1}(W(q))\nabla_{\mathbb H}f_{2j}(W(q))\, \cdot\, \textrm{d} \hat B(q).
\end{multline}

For $s\geq \sigma(s_m)$ it holds $s=\sigma^{-1}_m(s) + \sigma(s_m) - s_m$, and $\tilde S(\sigma^{-1}_m(s) -s_m) = \hat S(\sigma^{-1}_m(s)) - \hat S(s_m)$,
so
\begin{multline*}
Z^m_{2p+1}(s) = f_{2p+1}(W(s_m)) + \tilde S(\sigma^{-1}_m(s) -s_m)\\
 + 2\sum_{j=1}^{p}f_{2j}(W(s_m))\tilde B^1_j(\sigma^{-1}_m(s) -s_m) - f_{2j-1}(W(s_m))\tilde B^2_j(\sigma^{-1}_m(s) -s_m)  \\
 = f_{2p+1}(W(s_m))  +  \int_{s_m}^{\sigma_m^{-1}(s)} \ \textrm{d}  \hat S(q)\\
+ 2\sum_{j=1}^{p} \int_{s_m}^{\sigma_m^{-1}(s)} f_{2j}(W(s_m))  \ \textrm{d}  \hat B^1_j(q) - \int_{s_m}^{\sigma_m^{-1}(s)} f_{2j-1}(W(s_m))  \ \textrm{d}  \hat B^2_j(q)\\
 = f_{2p+1}(W(s_m))  +  2\sum_{j=1}^{p}\int_{s_m}^{\sigma_m^{-1}(s)} \hat B^2_j(q) \ \textrm{d} \hat B^1_j(q) -\hat B^1_j(q) \ \textrm{d} \hat B^2_j(q)\\
+ 2\sum_{j=1}^{p} \int_{s_m}^{\sigma_m^{-1}(s)} f_{2j}(W(s_m))  \ \textrm{d}  \hat B^1_j(q) - \int_{s_m}^{\sigma_m^{-1}(s)} f_{2j-1}(W(s_m))  \ \textrm{d}  \hat B^2_j(q)\\
 = f_{2p+1}(W(s_m))  
+ 2\sum_{j=1}^{p} \int_{s_m}^{\sigma_m^{-1}(s)} f_{2j}(W(s_m)) +\hat B^2_j(q) \ \textrm{d}  \hat B^1_j(q)\\ -  \big( f_{2j-1}(W(s_m)) + \hat B^1_j(q)\big)  \ \textrm{d}  \hat B^2_j(q).
\end{multline*}
From the last and \eqref{eq:Z2p+1} we derive
$$
Z^m_{2p+1}(s) = f_{2p+1}(W(0)) + \int_0^{\sigma_m^{-1}(s)} e_{2p+1}(q) \, \cdot\, \textrm{d} \hat B(q),
$$
where
$$
e_{2p+1}(q) = 
\begin{cases}
2\sum_{j=1}^{p} f_{2j}(W(q))\nabla_{\mathbb H}f_{2j-1}(W(q)) - f_{2j-1}(W(q))\nabla_{\mathbb H}f_{2j}(W(q)), & \text{ if } q< s_m;\\
\begin{bmatrix}
           f_{2}(W(s_m)) +\hat B^2_1(q) \\
           -f_{1}(W(s_m)) - \hat B^1_1(q) \\
           \vdots \\
           f_{2p-1}(W(s_m)) + \hat B^1_p(q)
         \end{bmatrix},  & \text{ if } q\geq s_m.
\end{cases}
$$
Again, by theorem \ref{theorem:Oksendal}  we have 
$$
\int_0^{\sigma_m^{-1}(s)} e_{2p+1}(q) \, \cdot\, \textrm{d} \hat B(q) = \int_0^{s} e_{2p+1}(r)\frac{1}{|e|(\sigma_m^{-1}(r))} \, \cdot\,  \textrm{d} \breve B(r).
$$
Therefore, taking into account  \eqref{eq:dZj}
$$
\textrm{d} Z^m_{2p+1}(s) = e_{2p+1}(s)\frac{1}{|e|(\sigma_m^{-1}(s))} \, \cdot\,  \textrm{d} \breve B(s)
=  2\sum_{j=1}^{p} Z^m_{2j} \ \textrm{d} Z^m_{2j-1} - Z^m_{2j-1} \ \textrm{d} Z^m_{2j}.
$$
Thus we have proved that $Z^m(s) = \big(Z^m_1(s), Z^m_2(s),\dots, Z^m_{2p+1}(s)\big)$ is a horizontal Brownian motion.
\end{proof} 

\begin{thm}
Let $U$ be a domain in $\mathbb H^n$ and let $f:U\to\mathbb H^n$ be a Brownian path preserving mapping.
Then $f=\pi_b\circ\varphi_A\circ\delta_{\alpha}|_{U}$, i. e. $f$ is the restriction on $U$ of the composition of translation, rotation, and dilatation.  
\end{thm}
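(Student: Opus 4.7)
The plan is to combine Theorem~\ref{theorem:main} with a Liouville-type rigidity theorem on the Heisenberg group. By Theorem~\ref{theorem:main}, the Brownian path preserving map $f$ is a harmonic morphism $U\to\mathbb H^n$, so conditions \eqref{eq:hm1}--\eqref{eq:hm3} hold. Hypoellipticity of $\Delta_{\mathbb H}$ upgrades the continuous $f$ to a $C^\infty$ mapping, and non-constancy together with \eqref{eq:hm2} forces the conformal factor $h(g)=|\nabla_{\mathbb H}f_1|^2(g)$ to be positive on an open dense subset of $U$, so $f$ is a local diffeomorphism there. In the equidimensional case $p=n$, condition \eqref{eq:hm2} states precisely that the horizontal differential $d_Hf\colon H_g\mathbb H^n\to H_{f(g)}\mathbb H^n$, viewed as a $2n\times 2n$ real matrix, is conformal with factor $\sqrt{h(g)}$; combined with the contact condition \eqref{eq:hm3}, this is the defining property of a $1$-quasiconformal mapping on $\mathbb H^n$.

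Applying the Kor\'anyi--Reimann Liouville-type theorem, every $1$-quasiconformal mapping of a domain in $\mathbb H^n$ to $\mathbb H^n$ extends to a Heisenberg M\"obius transformation, i.e.\ a composition of left translations $\pi_b$, rotations $\varphi_A$ with $A\in U(n)$, dilations $\delta_{\alpha}$, and the Heisenberg inversion $\iota$. The inversion is then ruled out by \eqref{eq:hm1}: the coordinate functions of $\iota$, which are rational functions built from the Kor\'anyi gauge and $\rho^{-2}$, are \emph{not} $\Delta_{\mathbb H}$-harmonic, as evidenced by the Kelvin-type formula on $\mathbb H^n$, which multiplies harmonic functions by a non-constant factor $\rho^{-(Q-2)}$ with $Q=2n+2$, rather than preserving harmonicity. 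Consequently, if any inversion factor occurred in the M\"obius decomposition of $f$, then some coordinate $f_i$ would fail \eqref{eq:hm1}. Therefore $f$ is a pure Heisenberg similarity, yielding $f=\pi_b\circ\varphi_A\circ\delta_{\alpha}|_U$.

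The main obstacle I anticipate is the clean invocation of the Kor\'anyi--Reimann theorem \emph{locally} rather than globally---specifically, the fact that a local $1$-QC map extends uniquely to a global Heisenberg M\"obius transformation, because the defining PDE system is finite dimensional and rigid---together with the explicit verification that inversion is incompatible with componentwise $\Delta_{\mathbb H}$-harmonicity. An alternative, more self-contained route would be to feed \eqref{eq:hm1}--\eqref{eq:hm2} into a Bochner-type identity for the sub-Laplacian and conclude first that $h$ is constant, from which the contact condition together with the harmonic morphism property would force $f$ to be a Heisenberg similarity directly, without invoking Liouville.
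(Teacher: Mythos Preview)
Your proposal is correct and follows essentially the same route as the paper: invoke Theorem~\ref{theorem:main} to obtain the harmonic morphism conditions, deduce that $f$ is $1$-quasiconformal (the paper phrases this as the distortion identity $\|D_Hf\|^{2n+2}=|J(\cdot,f)|$), apply a Liouville-type rigidity theorem on $\mathbb H^n$ to conclude $f$ is the restriction of a M\"obius map, and then exclude inversion because it fails to be a harmonic morphism. The only cosmetic difference is that the paper cites Vodop'yanov's version of the Liouville theorem rather than Kor\'anyi--Reimann, and dispatches the inversion case in a single clause rather than via the Kelvin-transform heuristic you sketch.
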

\begin{proof}
Let $f:U\to\mathbb H^n$ is a Brownian path preserving mapping. 
Due to theorem \ref{theorem:main} $f$ is a harmonic morphism, so by  \eqref{eq:hm1} and \eqref{eq:hm2} we have
\[
\|D_{H}f(x)\|^{2n+2} = |J(x,f)|,
\]
where $D_{H}f$ and $J(\cdot,f)$ are the formal horizontal differential and the formal Jacobian of $f$.
The last equation means that distortion coefficient of $f$ equals $1$.
Then, by \cite[Theorem 12]{V1999} mapping $f$ is constant or the restriction of some M\"obius transform to $U$.
It remains to note that  translation, rotation, and dilatation are harmonic morphisms, but inversion is not.
\end{proof}

\begin{rem}
In the case $U\subset \mathbb H^n$ and $p<n$ no nontrivial map $f:U\to\mathbb H^p$ is contact.
Therefore there are no harmonic morphisms in this situation.
\end{rem}
    
\bibliographystyle{plain}
\bibliography{bibliography}
\end{document}